\def\subsubsection{\@startsection{subsubsection}{3}%
  \z@{.5\linespacing\@plus.7\linespacing}{-.5em}%
  {\normalfont\bfseries}}
\theoremstyle{plain} 
\newtheorem{thm}{Theorem}[section] 
\newtheorem{cor}[thm]{Corollary} 
\newtheorem{lem}[thm]{Lemma}
\theoremstyle{definition} 
\newtheorem{defn}{Definition}[] 
\newtheorem{es}{Example}
\theoremstyle{remark} 
\newtheorem{oss}{Remark}
\newcommand{\xhooklongrightarrow}[2][]{% 
  \ext@arrow3399{\hooklongrightarrowfill@}{#1}{#2}} 
 \DeclareMathOperator{\hgt}{ht}
    \DeclareMathOperator{\Tr}{Tr}
   \DeclareMathOperator{\lt}{in_\prec}
 \DeclareMathOperator{\Min}{Min}
\title{Knutson ideals of generic matrices\\}
\author{Lisa Seccia}
\address{Universit\`a  di Genova,  Dipartimento di Matematica. 
 Via Dodecaneso 35, 16146 Genova, Italy}
\email{seccia@dima.unige.it}
 \date{}
\begin{document}
 \maketitle
 \begin{abstract}
% In a recent paper the author began the study of a new class of ideals, called \emph{Knutson ideals}, whose properties allow to prove interesting results about radicality, $F$-purity and Gr\"obner bases. 
%proving that determinantal ideals of Hankel matrices are Knutson ideals.
 In this paper we show that determinantal ideals of generic matrices are Knutson ideals. This fact leads to a useful result about Gr\"obner bases of certain sums of determinantal ideals.  More specifically, given $I=I_1+\ldots+I_k$ a sum of ideals of minors on adjacent columns or rows, we prove that the union of the Gr\"obner bases of the $I_j$'s is a Gr\"obner basis of $I$.
 %More specifically, we get that a Gr\"obner basis of a sum of ideals of minors on adjacent columns or rows is the union of their Gr\"obner bases.
 
 \end{abstract}

 \section{Introduction}

 Let $\mathbb{K}$ be a field of any characteristic. Fix $f \in S= \mathbb{K}[x_1,\ldots,x_n]$ a polynomial such that its leading term $\lt (f)$ is a squarefree monomial  for some term order $\prec$. We can define many more ideals starting from the principal ideal $(f)$ and taking associated primes, intersections and sums. Thereby, if $\mathbb{K}$ has characteristic $p$, we obtain a family of ideals which are compatibly split with respect to $\Tr(f^{p-1}\bullet)$ (see \cite{Kn} for more details).\par
 Geometrically  this means that we start from the hypersurface defined by $f$ and we construct a family of new subvarieties $\{Y_i\}_i$ by taking irreducible components, intersections and unions. 
 
 \begin{defn}[Knutson ideals] \label{K.I.} Let $f \in S= \mathbb{K}[x_1,\ldots,x_n]$ be a polynomial such that its leading term $\lt (f)$ is a squarefree monomial  for some term order $\prec$ .
Define $\mathcal{C}_f$ to be the smallest set of ideals satisfying the following conditions:
\begin{enumerate}
\item[1.] $(f) \in \mathcal{C}_f$;
\item[2.]  If $I \in \mathcal{C}_f$ then $I:J \in \mathcal{C}_f$ for every ideal $J \subseteq S$;
\item[3.] If $I$ and $J$ are in $\mathcal{C}_f$ then also $I+J$ and $I \cap J$ must be in $\mathcal{C}_f$.
\end{enumerate} 
 \end{defn}
 
 This class of ideals has some interesting properties which were first proved by Knutson in the case $\mathbb{K}=\mathbb{Z}/p\mathbb{Z}$ and then generalized to fields of any characteristic in \cite{Se}:
 
\begin{itemize}
 \item[i)] Every $I \in \mathcal{C}_f$ has a squarefree initial ideal, so every Knutson ideal is radical.
 \item[ii)] If two Knutson ideals are different their initial ideals are different. So $\mathcal{C}_f$ is finite.
  \item[iii)] The union of the Gr\"obner bases of Knutson ideals associated to $f$ is a Gr\"obner basis of their sum.
 % A Gr\"obner basis of a sum of Knutson ideals associated to $f$ is the union of their Gr\"obner bases. 
\end{itemize}

 \begin{oss}
 Actually, assuming that every ideal of $\mathcal{C}_f$ is radical, the second condition in Definition \ref{K.I.} can be replaced by the following:
 \begin{itemize}
 \item[$2^\prime .$] If $I \in \mathcal{C}_f$ then $\mathcal{P} \in \mathcal{C}_f$ for every $\mathcal{P} \in \Min(I)$.
 \end{itemize}
 
 \end{oss}
 In this paper we will continue the study undertaken in \cite{Se} about Knutson ideals.\par

So far, it has been proved that determinanatal ideals of Hankel matrices are Knutson ideals for a suitable choice of $f$ ( \cite[Theorems 3.1,3.2]{Se}). As a consequence of these results, one can derive an alternative proof (see \cite[Corollary 3.3]{Se}) of the $F$-purity of Hankel determinantal rings, a result recently proved in \cite{CMSV}.\\

 In this paper we are going to show that also determinantal ideals of  generic matrices are Knutson ideals (see Theorem \ref{propgen2}). In particular, they define $F$-split rings. This was already known since the 1990's from a result by Hochster and Huneke (\cite{HH}). \par 
As a corollary we obtain an interesting result about Gr\"obner bases of certain sums of determinantal ideals.  More specifically, given $I=I_1+\ldots+I_k$ a sum of ideals of minors on adjacent columns or rows, we will prove that the union of the Gr\"obner bases of the $I_j$'s is a Gr\"obner basis of $I$ (see Corollary \ref{corsum}).\par

\begin{es}
Let $X=(x_{ij})$ be the generic square matrix of size 6 and consider the ideal
$$J= I_3(X_{[1,3]})+I_3(X^{[1,3]})$$
in the polynomial ring $S=\mathbb{K}[X]$. Then $J$ is the ideal generated by the 3-minors of the following highlighted ladder

\[ X= \left[ \: 
     % \: serves as a spacer between the left-hand bracket of 
     % the matrix and the left-hand side of the inner frame
\begin{array}{>{\columncolor{blue!5}}c>{\columncolor{blue!5}}c>{\columncolor{blue!5}}cccc}
\rowcolor{blue!5}
x_{11} & x_{12}& x_{13} & x_{14} & x_{15}& x_{16} \\
\rowcolor{blue!5}
x_{21} & x_{22} & x_{23} & x_{24} & x_{25}& x_{26} \\
\rowcolor{blue!5}
x_{31} & x_{32} & x_{33} & x_{34} & x_{35}& x_{36} \\
x_{41} & x_{42} & x_{43} & x_{44} & x_{45}& x_{46} \\
x_{51} & x_{52} & x_{53} & x_{54} & x_{55}& x_{56} \\
x_{61} & x_{62} & x_{63}& x_{64}& x_{65}& x_{66} \\

\end{array}
\right]. \] 

From Corollary \ref{corsum}, we get that set of 3-minors that generate $J$ is
a Gr\"obner basis of $J$ with respect to any diagonal term order. Actually, this result was already known for ladder determinantal ideals (see \cite[Corollary 3.4]{Na}). Nonetheless, Corollary \ref{corsum} can be applied to more general sums of ideals. Consider for instance the ideal
$$J=I_2(X_{[12]})+I_2(X^{[12]})+I_2(X_{[56]})+I_2(X^{[56]})$$
that is the ideal generated by the 2-minors inside the below coloured region of $X$

\[ X= \left[ \: 
     % \: serves as a spacer between the left-hand bracket of 
     % the matrix and the left-hand side of the inner frame
\begin{array}{>{\columncolor{blue!5}}c>{\columncolor{blue!5}}ccc>{\columncolor{blue!5}}c>{\columncolor{blue!5}}c}
\rowcolor{blue!5}
x_{11} & x_{12}& x_{13}& x_{14} & x_{15}& x_{16} \\
 \rowcolor{blue!5}
x_{21} & x_{22} & x_{23} & x_{24} & x_{25}& x_{26} \\
x_{31} & x_{32} & x_{33} & x_{34} & x_{35}& x_{36} \\
x_{41} & x_{42} & x_{43} & x_{44} & x_{45}& x_{46} \\
\rowcolor{blue!5}  
x_{51} & x_{52} & x_{53} & x_{54} & x_{55}& x_{56} \\
 \rowcolor{blue!5}
x_{61} & x_{62} & x_{63} & x_{64}& x_{65}& x_{66} \\
   
\end{array}
\right]. \] 

In this case, $J$ is not a ladder determinantal ideal but we can use Corollary \ref{corsum} to prove that the 2-minors that generate $J$ form a Gr\"obner basis for the ideal $J$ with respect to any diagonal term order. In fact, $J$ is a sum of ideals of the form $I_t(X_{[a,b]})$ or $I_t(X^{[c,d]})$ which are Knutson ideals from Theorem \ref{propgen2}. Then a Gr\"obner basis for $J$ is given by the union of their Gr\"obner bases.\par
Furthermore, we can also consider sums of ideals of minors of different sizes, such as
$$J=I_2(X_{[2,4]})+I_3(X^{[2,5]}).$$
In this case, $J$ is generated by the 2-minors of the blue rectangular submatrix and the 3-minors of the red rectangular submatrix illustrated below
\[ X= \left[ \: 
     % \: serves as a spacer between the left-hand bracket of 
     % the matrix and the left-hand side of the inner frame
\begin{array}{c|ccc|cc}
\arrayrulecolor{blue}\cline{2-4}
x_{11} & x_{12}& x_{13}& x_{14} & x_{15}& x_{16} \\
 \arrayrulecolor{red}\cline{1-6}
\multicolumn{1}{|c !{\color{blue}\vrule}}{x_{21}} & x_{22} & x_{23} & x_{24} & x_{25}& \multicolumn{1}{c|}{x_{26}} \\
\multicolumn{1}{|c!{\color{blue}\vrule}}{x_{31}} & x_{32} & x_{33} & x_{34} & x_{35}& \multicolumn{1}{c|}{x_{36}} \\
\multicolumn{1}{|c!{\color{blue}\vrule}}{x_{41}} & x_{42} & x_{43} & x_{44} & x_{45}& \multicolumn{1}{c|}{x_{46}} \\
\arrayrulecolor{red}\cline{1-6}
x_{51} & x_{52} & x_{53} & x_{54} & x_{55}& x_{56} \\
x_{61} & x_{62} & x_{63} & x_{64} & x_{65}& x_{66} \\
\arrayrulecolor{blue}\cline{2-4}
   
\end{array}
\right]. \]
Again from Corollary \ref{corsum}, being $I_2(X_{[2,4]})$ and $I_3(X^{[2,5]})$ Knutson ideals, the union of their Gr\"obner bases is a Gr\"obner basis for $J$. So, a Gr\"obner basis of $J$ is given by the 2-minors of $X_{[2,4]}$ and the 3-minors of $X^{[2,5]}$.

\end{es}

 Unlike in the case of Hankel matrices, a characterization of all the ideals belonging to the family $C_f$ has not been found yet. A first step towards this result would be to understand the primary decompostion of certain sums belonging to the family. Some known results (see \cite{HS}, \cite{MR}) suggest us what these primary decompositions might be  and computer experiments seem to confirm this guess. Finding this characterization could lead to interesting properties on the Gr\"obner bases of determinantal-like ideals and it would also answer to the question asked by F.Mohammadi and J. Rhau in \cite{MR}.\\
 
\textbf{Acknowledgments}. The author is grateful to her advisor Matteo Varbaro for his valuable comments and precious advice.

%In Section 3 we will apply previous results to study $F$-purity of binomial edge ideals. Inspired by Matsuda's work on weakly closed graphs, we will show that their binomial edge ideals are Knutson ideals (in particular, they are $F$-pure). Furthermore, we conjecture that the converse is still true, i.e, the binomial edge ideals in $\mathcal{C}_f$ are exactly those associated to weakly closed graphs. 

\section{Knutson ideals and determinantal ideals of generic matrices}

Let $m,n$ be two positive integers with $m<n$, we will denote by $X_{mn}$ the generic matrix of size $m \times n$ with entries $x_{ij}$, that is

\[ X_{mn}=
\begin{bmatrix}
    x_{11}       & x_{12} & x_{13} & \dots & x_{1n} \\
    x_{21}       & x_{22} & x_{23} & \dots & x_{2n} \\
    x_{31}       & x_{32} & x_{33} & \dots & x_{3n} \\
   \vdots & \vdots & \vdots & \ddots & \vdots \\
    x_{m1}       & x_{m2} & x_{m3} & \dots & x_{mn}
\end{bmatrix}.
\]

Moreover, for any $1 \leq i <j \leq n$ and $1 \leq k<l \leq m$, we denote by $X_{[i,j]}^{[k,l]}$ the submatrix of $X_{mn}$ with column indices $i, i+1,\ldots, j$ and row indices $k,k+1,\ldots, l$. In the case $[k,l]=[1,m]$, we omit the superscript and we simply write $X_{[i,j]}$.\\

Given a generic matrix $X_{mn}$ and an integer $t \leq \min(m,n)$, we will denote by $I_t(X)$ the determinantal ideal in $S= \mathbb{K}[X]=\mathbb{K}[x_{i,j}\mid 1 \leq i \leq m, 1 \leq j \leq n]$ generated by all the $t$-minors of $X$.\\

We are going to prove that determinantal ideals of a generic matrix are Knutson ideals for a suitable choice of the polynomial $f$. 

%\begin{prop} \label{propgen}Let $X=X_n$ be the generic square matrix of size $n$ with entries $x_{ij}$. Consider the polynomial 
%$$f= \det X \cdot \prod_{k=1}^{n-1} \left(\det X_{[k+1,n]}^{[1,n-k]} \cdot \det X_{[1,n-k]}^{[k+1,n]} \right)$$  in $S=\mathbb{K}[x_{ij} \mid 1 \leq i,j \leq n]$. Then $I_t(X) \in \mathcal{C}_f$ for $t=1,\ldots, n$.
%
%
%\end{prop}

\begin{thm} \label{propgen2}Let $X=X_{mn}$ be the generic matrix of size $m \times n$ with entries $x_{ij}$ and $m<n$. Consider the polynomial 
$$f= \prod_{k=0}^{m-2} \left(\det X_{[1,k+1]}^{[m-k,m]} \cdot \det X_{[n-k,n]}^{[1,k+1]} \right)\cdot \prod_{k=1}^{n-m+1} \left(\det X_{[k,m+k-1]} \right)$$  in $S=\mathbb{K}[x_{ij} \mid 1 \leq i \leq m, 1 \leq j \leq n]$. Then $I_t(X) \in \mathcal{C}_f$ for $t=1,\ldots, m$. \par
Furthermore, $I_t(X_{[a,b]})$ (respectively, $ I_t(X^{[a,b]})$) are Knutson ideals associated to $f$ for $t=1,\ldots, m$ and $0 \leq a <b \leq n$ (respectively, $0 \leq a <b \leq m$) with $b-a+1 \geq t$.

\end{thm}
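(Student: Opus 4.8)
The plan is to show that the polynomial $f$ chosen in the statement has a squarefree leading term with respect to a diagonal term order $\prec$, and then to generate all the required determinantal ideals from $(f)$ by iterated colon, sum and intersection operations. First I would verify the hypothesis of Definition~\ref{K.I.}: with respect to any diagonal term order, the initial term of each maximal minor $\det X_{[k,m+k-1]}$ is the product of its main-diagonal entries, and similarly $\init_\prec(\det X_{[1,k+1]}^{[m-k,m]})$ and $\init_\prec(\det X_{[n-k,n]}^{[1,k+1]})$ are products of distinct variables lying along the appropriate anti-corner diagonals. The key combinatorial point is that the diagonals coming from the three families of factors are pairwise disjoint as sets of variables — the ``northwest staircase'' minors $X_{[1,k+1]}^{[m-k,m]}$ use only the bottom-left triangle, the ``southeast staircase'' minors $X_{[n-k,n]}^{[1,k+1]}$ only the top-right triangle, and the sliding maximal minors $X_{[k,m+k-1]}$ sweep out the remaining band, each entry used once — so $\init_\prec(f) = \prod \init_\prec(\text{each factor})$ is squarefree. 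This is the step I expect to require the most careful bookkeeping: one must check the index ranges so that no variable $x_{ij}$ is hit twice.

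Next I would build $I_m(X)$, the ideal of maximal minors, inside $\mathcal{C}_f$. Since $(f)\in\mathcal{C}_f$ and $f$ factors into the displayed minors, each prime of $\Min((f))$ is one of the ideals $(\det X_{[1,k+1]}^{[m-k,m]})$, $(\det X_{[n-k,n]}^{[1,k+1]})$, or $(\det X_{[k,m+k-1]})$ — these are the minimal primes because each such determinant is irreducible (a generic determinant is prime) and they are pairwise coprime. By condition $2'$ in the Remark (legitimate since all Knutson ideals are radical) each of these principal primes lies in $\mathcal{C}_f$; in particular all $n-m+1$ maximal minors $\det X_{[k,m+k-1]}$ generate members of $\mathcal{C}_f$, and summing them gives $I_m(X)\in\mathcal{C}_f$. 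For the ideals $I_t(X)$ with $t<m$ I would argue by downward induction on $t$: the ideal $I_t(X)$ can be recovered from $I_{t+1}(X)$ together with suitably chosen smaller determinantal ideals of submatrices via a colon/sum manipulation — concretely, one intersects or takes colons of $I_{t+1}(X)$ against the ideals generated by the corner minors already shown to be in $\mathcal{C}_f$, exploiting that $I_t$ of a generic matrix is the colon of $I_{t+1}$ by (the ideal generated by) a single $t$-minor obtained by deleting a row and a column, or alternatively using the classical primary-decomposition relations among determinantal ideals and their restrictions to submatrices.

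Finally, for the ideals $I_t(X_{[a,b]})$ and $I_t(X^{[a,b]})$ attached to column-blocks and row-blocks, I would observe that these are again determinantal ideals of generic matrices — the submatrix $X_{[a,b]}$ is itself a generic matrix of size $m\times(b-a+1)$ — and that the polynomial $f$ restricts, after discarding the factors that do not fit in the block, to a polynomial of exactly the same shape for that smaller generic matrix; more to the point, the relevant corner minors and sliding maximal minors of $X_{[a,b]}$ are among the minimal primes of the principal ideal $(f)$ already in $\mathcal{C}_f$ (they occur as minors of the original $X$ whose defining determinant divides $f$ or is obtained from a factor of $f$ by a colon operation). Applying the same two moves — passing to minimal primes to extract the maximal minors of the block, and the downward induction on $t$ using colons against corner minors — then places every $I_t(X_{[a,b]})$ and $I_t(X^{[a,b]})$ in $\mathcal{C}_f$. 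The main obstacle throughout is organizational rather than conceptual: choosing, at each stage, the explicit ideal $J$ for which the colon $I:J$ or the sum/intersection produces exactly the next determinantal ideal, and checking that no variable collisions spoil the squarefreeness of $\init_\prec(f)$.
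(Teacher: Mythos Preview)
Your proposal contains two genuine gaps that would block the argument.

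\textbf{First gap: the sum of the sliding maximal minors is not $I_m(X)$.} You write that summing the $n-m+1$ principal ideals $(\det X_{[k,m+k-1]})$ gives $I_m(X)$. This is false: the ideal of maximal minors has $\binom{n}{m}$ generators, and the ones on non-adjacent column sets are not in the ideal generated by the adjacent-column minors. Already for $m=2$, $n=3$ the minor on columns $\{1,3\}$ does not lie in the ideal generated by the minors on $\{1,2\}$ and $\{2,3\}$; the latter is a complete intersection of height~$2$ strictly contained in the prime $I_2(X)$. What is true is that $I_m(X)$ is a \emph{minimal prime} over that sum, so one more pass through condition~$2'$ is needed --- and for $t<m$ even that is not enough, because $I_t(X_{[i,t+i-1]})$ is not a factor of $f$ at all.

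\textbf{Second gap: the downward induction on $t$ via colons does not work.} Your suggested relation ``$I_{t+1}(X):(\text{a single }t\text{-minor})=I_t(X)$'' is false. For $t=1$ it would require $x_{11}x_{22}\in I_2(X)$, which fails. There is no clean colon identity that manufactures $I_t(X)$ directly from $I_{t+1}(X)$ in this way, so the inductive step as sketched cannot be carried out.

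\textbf{What the paper actually does.} The paper fixes $t$ and inducts on the \emph{width} of the column block. The base case (Lemma~\ref{lemG1}) shows that $I_t(X_{[i,t+i-1]})\in\mathcal{C}_f$ for every $i$: this ideal is a minimal prime over the complete intersection $H$ generated by the $m-t+1$ factors of $f$ whose diagonals pass through the block; one checks $\hgt H=m-t+1=\hgt I_t(X_{[i,t+i-1]})$ using the standard height formula. The inductive step is the primary decomposition
\[
I_t(X_{[a,b]})+I_t(X_{[a+1,b+1]})\;=\;I_t(X_{[a,b+1]})\cap I_{t-1}(X_{[a+1,b]}),
\]
proved by a short rank argument on the variety side. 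Since the left-hand side is in $\mathcal{C}_f$ by induction, so is its minimal prime $I_t(X_{[a,b+1]})$; iterating widens the block until one reaches $I_t(X_{[1,n]})=I_t(X)$. The argument for row-blocks $X^{[a,b]}$ is identical. The missing idea in your proposal is precisely this ``expanding window'' decomposition, which replaces both the incorrect sum claim and the non-existent colon identity.
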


A first step towards the proof of Theorem \ref{propgen2} is showing that all the ideals generated by the $t$-minors on $t$ adjacent columns are in $\mathcal{C}_f$. This fact is formally stated in the lemma below.

\begin{lem} \label{lemG1}
Let $X=X_{m \times n}$ be the generic square matrix of size $m \times n$ with entries $x_{ij}$ and let $f$ to be as in Theorem \ref{propgen2}. If we fix $t \leq m$, then:
$$I_t(X_{[i,t+i-1]}) \in \mathcal{C}_f \quad \forall i=1, \ldots, n-t+1.$$
\end{lem}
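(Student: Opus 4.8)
The plan is to build the ideals $I_t(X_{[i,t+i-1]})$ out of the principal ideal $(f)$ using only the three Knutson operations (colon, sum, intersection), exploiting the fact that $f$ is deliberately engineered as a product of maximal minors of ``staircase'' submatrices of $X$. First I would observe that each individual factor appearing in $f$ is a principal prime (each is an irreducible polynomial, being a determinant of a generic matrix), so that the minimal primes of $(f)$ are exactly the principal ideals generated by the factors $\det X_{[1,k+1]}^{[m-k,m]}$, $\det X_{[n-k,n]}^{[1,k+1]}$, and $\det X_{[k,m+k-1]}$. By the remark following Definition~\ref{K.I.} (all Knutson ideals being radical) each of these principal primes lies in $\mathcal{C}_f$, and so do all finite sums and intersections of them. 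The key point is that the $n-m+1$ factors $\det X_{[k,m+k-1]}$ are precisely the maximal ($m\times m$) minors sitting on the $m$ consecutive columns $[k,m+k-1]$; so the case $t=m$ of the lemma is essentially immediate, $I_m(X_{[i,m+i-1]}) = (\det X_{[i,m+i-1]})$, and that factor is a minimal prime of $(f)$.

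The substance is the range $t<m$. Here I would argue by descending induction on $t$, or equivalently build the smaller minors by taking iterated colon ideals of the maximal-minor ideals already produced. The mechanism I expect to use is the classical identity that a colon of a determinantal ideal by an appropriate minor (or by the ideal generated by a border of variables/minors) drops the size: concretely, for a generic matrix one has relations of the shape $I_t(Y) = \big(I_{t+1}(Y') : \Delta\big)$ where $Y'$ is $Y$ with an extra row or column adjoined and $\Delta$ is a suitable $1$-minor or $t$-minor placed in that new row/column. Because $f$ supplies, via its staircase factors $\det X_{[1,k+1]}^{[m-k,m]}$ and $\det X_{[n-k,n]}^{[1,k+1]}$, maximal minors of submatrices of every intermediate size $k+1$ anchored in the corners, one can feed these into colon and sum operations to manufacture $I_t$ on the required block of $t$ adjacent columns. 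So the inductive step reads: assuming $I_{t+1}$ of the relevant adjacent-column blocks is in $\mathcal{C}_f$, exhibit $I_t(X_{[i,t+i-1]})$ as a colon $I_{t+1}(Z):J$ (or a sum of such) with $Z$ a block of $t+1$ adjacent columns and $J$ generated by entries/minors coming from the corner staircase factors of $f$, then invoke conditions (2) and (3) of Definition~\ref{K.I.}.

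The main obstacle, and where I would spend the most care, is verifying the precise colon identity $I_t(X_{[i,t+i-1]}) = \big(I_{t+1}(Z) : J\big)$ at the level of ideals (not just up to radical), including checking that $\lt(f)$ is squarefree so that the whole machinery of \cite{Se} (properties (i)--(iii)) applies, and checking that the chosen $J$ is itself reachable inside $\mathcal{C}_f$ — that is, that $J$ is built from the factors of $f$, not an arbitrary ideal. I anticipate needing to be delicate about boundary cases near the first and last columns (where the staircase corner minors $\det X_{[1,k+1]}^{[m-k,m]}$ and $\det X_{[n-k,n]}^{[1,k+1]}$ come into play) versus the ``interior'' columns (handled by the full-width minors $\det X_{[k,m+k-1]}$), and about the dimension/codimension bookkeeping that guarantees the colon operation lowers the minor size by exactly one rather than collapsing. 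Once the colon identity is pinned down, the induction and the appeal to Definition~\ref{K.I.}(2),(3) are routine, and the statement for all $i$ and all $t\le m$ follows.
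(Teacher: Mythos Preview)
Your plan has a genuine gap at its core mechanism. The ``classical identity'' you propose, $I_t(Y) = \big(I_{t+1}(Y') : \Delta\big)$ with $Y'$ a generic matrix obtained from $Y$ by adjoining a row or column, cannot hold: $I_{t+1}(Y')$ is a \emph{prime} ideal (determinantal ideals of generic matrices are prime), so for any element or ideal $\Delta$ the colon $I_{t+1}(Y'):\Delta$ is either $I_{t+1}(Y')$ itself or the whole ring $S$. It can never equal the strictly larger proper ideal $I_t(Y)$. The same obstruction kills the variant ``a sum of such colons'': each summand is either $I_{t+1}(Z)$ or $S$, and a sum of ideals generated in degree $t+1$ cannot produce the degree-$t$ generators of $I_t(X_{[i,t+i-1]})$. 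So the descending induction on $t$ via colons does not get off the ground. (Incidentally, your worry that the auxiliary $J$ must itself lie in $\mathcal{C}_f$ is a misreading of Definition~\ref{K.I.}: condition~(2) allows $I:J$ for \emph{every} ideal $J\subseteq S$, with no constraint on $J$.)

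The paper's argument avoids induction on $t$ entirely and is much more direct. For each fixed $t$ and $i$ it exhibits an ideal $H$ which is the sum of exactly $m-t+1$ of the principal ideals generated by factors of $f$ (which factors depends on whether $i$ is near the left edge, the right edge, or in the interior; this is the three-case split you anticipated). These factors are chosen so that their diagonal initial terms are pairwise coprime, hence $H$ is a complete intersection of height $m-t+1$. On the other hand the height formula $\hgt I_t(X_{[i,t+i-1]})=(t-t+1)(m-t+1)=m-t+1$ shows that this prime ideal, which visibly contains $H$, is a \emph{minimal} prime over $H$. Since $H\in\mathcal{C}_f$ (sum of colon-extracted factors of $(f)$) and minimal primes of Knutson ideals are Knutson ideals, the conclusion follows in one stroke for every $t$ simultaneously.
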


\begin{proof}
It is known (e.g. see \cite{C} and \cite{BC}) that every determinanatal ideal of a generic matrix $X=X_{m \times n}$ is prime and its height is given by the following formula:
\begin{equation}\label{ht}
\hgt (I_t (X))=(n-t+1)(m-t+1).
\end{equation} 
Therefore
$$\hgt (I_t(X_{[i,t+i-1]}))=(t+i-1-i+1-t+1)(m-t+1)=m-t+1.$$
We have three possibilities for $i$.\\

\emph{1st case}: $m-t+1 \leq i \leq n-m+1$. Then
\begin{align*}
I_t(X_{[i,t+i-1]}) \supseteq \Big( \det X_{[i,i+m-1]},  & \det X_{[i-1,i+m-2]}, \ldots,\det X_{[i-m+t,i+t-1]} \Big).
\end{align*}

\emph{2nd case}: $i \leq m-t$
\begin{align*}
I_t(X_{[i,t+i-1]}) \supseteq \Big( \det X_{[1,m]}, &  \det X_{[2,m+1]}, \ldots,\det X_{[i,i+m-1]}, \det X_{[1,t+i+1]}^{[m-t-i+2,m]}, \\
&\det X_{[1,t+i]}^{[m-t-i+1,m]} \ldots, \det X_{[1,m-1]}^{[2,m]} \Big).
\end{align*}

\emph{3nd case}: $i \geq n-m+2$
\begin{align*}
I_t(X_{[i,t+i-1]}) \supseteq \Big( \det X_{[n-m+1,n]}, &  \det X_{[n-m,n-1]}, \ldots,\det X_{[t+i-m,t+i-1]}, \det X_{[i,n]}^{[1,n-i+1]}, \\
&\det X_{[i-1,n]}^{[1,n-i+2]} \ldots, \det X_{[n-m+2,n]}^{[1,m-1]} \Big).
\end{align*}
Define $H$ to be the right hand side ideal for each of the previous cases. Note that the initial ideal of $H$ is given by some of the diagonals of the matrix $X$. Since these monomials are coprime, this ideal is a complete intersection and $$\hgt (H)=m-t+1$$ in each of the above mentioned cases. So $I_t(X_{[i,t+i-1]})$ is minimal over $H$. \par
By Definition \ref{K.I.}, $(f): J \in \mathcal{C}_f$ for every ideal $J \subseteq S$. Taking $J$ to be the principal ideal generated by the product of some of the factors of $f$, we have that all the principal ideals generated by one of the factors of $f$ are Knutson ideal associated to $f$. Being $H$ a sum of these ideals, $H \in \mathcal{C}_f$.\\ 
In conclusion, we get that $I_t(X_{[i,t+i-1]})$ is a minimal prime over an ideal of $\mathcal{C}_f$. So it is in $\mathcal{C}_f$.
\end{proof}

Using Lemma \ref{lemG1}, we can then prove Theorem \ref{propgen2}.

\begin{proof} Fix $t \in \{1,\ldots,m\}$. We want to prove that $I_t(X) \in \mathcal{C}_f$. By lemma \ref{lemG1}, we know that $I_t(X_{[1,t]}),I_t(X_{[2,t+1]}) \in \mathcal{C}_f$  and so their sum.\\
We claim that that the minimal prime decomposition of the sum is given by
$$I_t(X_{[1,t]})+I_t(X_{[2,t+1]})=I_t(X_{[1,t+1]}) \cap I_{t-1}(X_{[2,t]}).$$
To simplify the notation, we set $I_1:=I_t(X_{[1,t]}),I_2:=I_t(X_{[2,t+1]}),P_1= I_t(X_{[1,t+1]})$ and $P_2= I_{t-1} (X_{[2,t]})$.
 We want to prove that the minimal prime decomposition is given by:
 $$I_1+I_2=P_1 \cap P_2.$$
We already know that $I_1+ I_2 \subseteq P_1 \cap P_2$. Passing to the correspondent algebraic varieties, we get the reverse inclusion
 
 $$\mathcal{V} (I_1+ I_2) \supseteq \mathcal{V} (P_1 \cap P_2)$$
 
 If we prove that $\mathcal{V} (I_1+ I_2) \subseteq \mathcal{V} (P_1 \cap P_2)$, then
 
 $$\mathcal{V} (I_1+ I_2)=\mathcal{V} (P_1 \cap P_2)$$
 
 and this is equivalent to say that $ \sqrt{I_1+I_2}= \sqrt{P_1 \cap P_2}$. Since $I_1+ I_2 \in \mathcal{C}_f$, it is radical and $P_1 \cap P_2$ is radical because $P_1$ and $P_2$ are both radical ideals, then
 $$I_1+I_2=P_1 \cap P_2$$
 and we are done.\\
 For this aim, let $\mathbf{X} \in \mathcal{V}(I_1+I_2)= \mathcal{V}(I_1) \cap \mathcal{V}(I_2)$. This means that $\mathbf{X}_{[1,t]}$ and $\mathbf{X}_{[2,t+1]}$ have rank less or equal than $t-1$. Now we consider two cases:\par
 \emph{Case 1.} Suppose that $\mathbf{X}_{[2,t]}$ has rank less or equal than $t-2$. This implies that all the $(t-1) \times (t-1)$-minors corresponding to this interval vanish on $\mathbf{X}$. So $\mathbf{X} \in \mathcal{V}(P_2)$.\par
\emph{Case 2.} Suppose that $\mathbf{X}_{[2,t]}$ has full rank, namely $t-1$. Then it generates a vector space $V$ of dimension $t-1$. But by assumption, $\mathbf{X}_{[1,t]}$ and $\mathbf{X}_{[2,t+1]}$ have rank less or equal than $t-1$, so they also generate the vector space $V$. Consequently, $\mathbf{X}_{[1,t+1]}$ generates the vector space $V$ and this means that all the $t \times t$- minors of our matrix $X$ vanish on $\mathbf{X}$. Therefore we have proved that $ \mathbf{X} \in \mathcal{V}(P_1)$.\par
This proves the claim and shows that $I_t(X_{[1,t+1]}) \in \mathcal{C}_f$, being a minimal prime over a Knutson ideal.\par
In the same way, simply shifting the submatrices, we get that $I_t(X_{[k,t+k]}) \in \mathcal{C}_f$ for every $k=1, \ldots, n-t$.\par
In particular $I_t(X_{[2,t+2]}) \in \mathcal{C}_f$; therefore  the sum $I_t(X_{[1,t+1]}) +I_t(X_{[2,t+2]})$ belongs to $\mathcal{C}_f$. \par
Using a similar argument to that used to prove the claim, it can be shown that the primary decomposition of the latter sum is given by
$$I_t(X_{[1,t+1]}) +I_t(X_{[2,t+2]})=I_t(X_{[1,t+2]}) \cap I_{t-1}(X_{[2,t+1]}). $$
Therefore $I_t(X_{[1,t+2]})$ is a Knutson ideal associated to $f$.\par
Again, shifting the submatrices, the same argument shows that $I_t(X_{[k,t+k+1]}) \in \mathcal{C}_f$ for every $k=1, \ldots, n-t-1$.\par
Iterating this procedure we get that $I_t(X_{[a,b]})\in \mathcal{C}_f$ for every $1\leq a<b\leq n$ such that $b-a+1\geq t$. In particular, $I_t(X_{[1,n-1]}), I_t(X_{[2,n]}) \in \mathcal{C}_f$. Hence their sum belongs to $\mathcal{C}_f$.\par
Again, one can show that the primary decomposition of the sum is given by
$$I_t(X_{[1,n-1]}) +I_t(X_{[2,n]})=I_t(X_{[1,n]}) \cap I_{t-1}(X_{[2,n-1]}). $$
This shows that $I_t(X_{[1,n]}) \in \mathcal{C}_f$ and we are done.\par 
Notice that an identical proof shows that $I_t(X^{[a,b]}) \in \mathcal{C}_f$ for every $0 \leq a <b \leq m$ with $b-a+1 \geq t$.

\end{proof}

As an immediate consequence of the previous theorem, we get an alternative proof of $F$-purity of determinantal ideals of generic matrices.

\begin{cor} \label{corgen}
 Assume that $\mathbb{K}$ is a field of characteristic $p$ and let $X$ be a generic matrix of size $m\times n$. Then $S/I_t(X)$ is F-pure.
\end{cor}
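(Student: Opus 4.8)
The plan is to deduce $F$-purity of $S/I_t(X)$ directly from Theorem \ref{propgen2} together with property (i) of Knutson ideals recalled in the introduction. By Theorem \ref{propgen2}, for the polynomial $f$ defined there we have $I_t(X) \in \mathcal{C}_f$. Since $\mathbb{K}$ has characteristic $p$, the set $\mathcal{C}_f$ consists of ideals that are compatibly split with respect to the Frobenius splitting $\Tr(f^{p-1}\bullet)$ of $S$ (this is exactly the setup recalled at the beginning of the introduction, following \cite{Kn}). First I would check that $\lt(f)$ is indeed a squarefree monomial for a diagonal term order, so that Definition \ref{K.I.} applies and the splitting $\Tr(f^{p-1}\bullet)$ is well defined: each factor of $f$ is the determinant of a generic (sub)matrix whose leading term under a diagonal order is the product of the main-diagonal entries, and these diagonals involve pairwise distinct variables, so $\lt(f)$ is squarefree.

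Next I would recall the standard fact that if an ideal $I$ of $S$ is compatibly split by a Frobenius splitting $\varphi\colon S \to S$, then $\varphi$ descends to a Frobenius splitting of $S/I$; equivalently, $S/I$ is $F$-split, hence $F$-pure since $S$ (and therefore $S/I$) is a finitely generated algebra over $\mathbb{K}$. Applying this with $\varphi = \Tr(f^{p-1}\bullet)$ and $I = I_t(X)$ gives that $S/I_t(X)$ is $F$-pure. Alternatively, one can invoke property (i) of the introduction: every $I \in \mathcal{C}_f$ has a squarefree initial ideal with respect to $\prec$; then $S/\lt(I_t(X))$ is $F$-pure because it is a Stanley–Reisner ring (squarefree monomial quotients are $F$-pure), and $F$-purity deforms well under Gröbner degeneration — more precisely, if $\init_\prec(I)$ is $F$-pure then so is $S/I$, by semicontinuity of $F$-purity along the flat family degenerating $S/I$ to $S/\init_\prec(I)$.

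The only real content is therefore the bookkeeping that makes one of these two routes precise; there is no genuine obstacle, since the hard work has already been done in Theorem \ref{propgen2}. I would present the first route, as it is the most transparent: state that membership in $\mathcal{C}_f$ means compatible splitting, recall that a quotient by a compatibly split ideal inherits the splitting, and conclude. A remark could be added pointing out that this recovers the result of Hochster and Huneke \cite{HH} on the $F$-purity (indeed $F$-regularity) of determinantal rings of generic matrices.
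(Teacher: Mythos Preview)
Your first route is exactly the argument the paper gives: $I_t(X)\in\mathcal{C}_f$ by Theorem~\ref{propgen2}, hence $I_t(X)$ is compatibly split with respect to $\Tr(f^{p-1}\bullet)$, so the splitting descends to $S/I_t(X)$, which is therefore $F$-split and thus $F$-pure. The one step the paper makes explicit that you omit is a preliminary reduction to a perfect base field (tensor with the algebraic closure and use that $F$-purity descends), which is needed so that Knutson's trace construction actually defines a Frobenius splitting of $S$; you should add that sentence.
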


\begin{proof}
 We may assume that $\mathbb{K}$ is a perfect field of positive characteristic. In fact, we can always reduce to this case by tensoring with the algebraic closure of $\mathbb{K}$ and the $F$-purity property descends to the non-perfect case. Using Lemma 4 in \cite{Kn}, we know that the ideal $(f)$ is compatibly split with respect to the Frobenius splitting defined by $\Tr (f^{p-1}\bullet)$ (where $f$ is taken to be as in the previous theorems). Thus all the ideals belonging to $\mathcal{C}_f$ are compatibly split with respect to the same splitting, in particular $I_t(X)$. This implies that such Frobenius splitting of $S$ provides a Frobenius splitting of $S/I_t (X)$. Being $S/I_t(X)$ $F$-split, it must be also $F$-pure.
\end{proof}

Furthermore, we obtain an interesting and useful result about Gr\"obner bases of certain sums of determinantal ideals.

\begin{cor}\label{corsum} Let $X$ be a generic matrix of size $m \times n$ and let $I$ be a sum of ideals, say $I=I_1+I_2+\ldots+I_k$, where each $I_i$ is of the form either $I_{t_i}(X_{[a_i,b_i]})$ or $I_{t_i}(X^{[a_i,b_i]})$. Then
 $$\mathcal{G}_I= \mathcal{G}_{I_1}\cup \mathcal{G}_{I_2}\cup \ldots\cup \mathcal{G}_{I_k}$$
 where $\mathcal{G}_J$ denotes a Gr\"obner basis of the ideal $J$.\par
 Furthermore, if $\mathbb{K}$ has positive characteristic, $I$ is also $F$-pure.
\end{cor}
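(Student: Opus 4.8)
The plan is to invoke the three general properties of Knutson ideals listed in the introduction, specialized to the polynomial $f$ of Theorem \ref{propgen2}. First I would observe that, by Theorem \ref{propgen2}, each summand $I_i$ — being of the form $I_{t_i}(X_{[a_i,b_i]})$ or $I_{t_i}(X^{[a_i,b_i]})$ — belongs to $\mathcal{C}_f$, where $f$ is the fixed polynomial from that theorem whose leading term (with respect to a diagonal term order $\prec$) is a squarefree monomial. Hence all of $I_1,\dots,I_k$ lie in the same family $\mathcal{C}_f$. Then property (iii) of Knutson ideals states precisely that the union of Gröbner bases of finitely many ideals in $\mathcal{C}_f$ is a Gröbner basis of their sum; applying it (inductively, or directly to the $k$-fold sum) to $I_1,\dots,I_k$ yields $\mathcal{G}_I = \mathcal{G}_{I_1}\cup\cdots\cup\mathcal{G}_{I_k}$, which is the first assertion.

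For the second assertion, the key point is that $I = I_1 + \cdots + I_k$ is itself an element of $\mathcal{C}_f$: this follows from condition 3 of Definition \ref{K.I.}, since $\mathcal{C}_f$ is closed under sums and each $I_i \in \mathcal{C}_f$. If $\mathbb{K}$ has positive characteristic $p$, then (reducing to a perfect field by base change to $\overline{\mathbb{K}}$ as in the proof of Corollary \ref{corgen}, so that $F$-purity descends) Lemma 4 of \cite{Kn} gives that $(f)$ is compatibly split with respect to the Frobenius splitting $\Tr(f^{p-1}\bullet)$, whence every ideal in $\mathcal{C}_f$ — in particular $I$ — is compatibly split with respect to that same splitting. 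A Frobenius splitting of $S$ compatible with $I$ induces a Frobenius splitting of $S/I$, so $S/I$ is $F$-split, hence $F$-pure. This is essentially verbatim the argument of Corollary \ref{corgen}.

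I do not expect any serious obstacle: the entire statement is a formal consequence of Theorem \ref{propgen2} together with the already-established general properties of Knutson ideals. The only point requiring a word of care is that all the $I_i$ must be taken inside the \emph{same} ambient ring $S = \mathbb{K}[X]$ and associated to the \emph{same} $f$ — which is automatic here, since Theorem \ref{propgen2} produces every ideal of the two admissible forms as a member of $\mathcal{C}_f$ for that one fixed $f$ — and that a diagonal term order is the relevant choice of $\prec$ making $\lt(f)$ squarefree. Once these are noted, properties (ii)–(iii) (finiteness of $\mathcal{C}_f$ and the Gröbner-basis-of-sums property) and closure under sums do all the work.
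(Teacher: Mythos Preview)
Your proposal is correct and follows exactly the paper's approach: invoke Theorem \ref{propgen2} to place each $I_i$ in $\mathcal{C}_f$, then apply property (iii) of Knutson ideals for the Gr\"obner basis statement; the paper's own proof is in fact terser and does not spell out the $F$-purity argument, which you handle correctly by observing $I\in\mathcal{C}_f$ and repeating the proof of Corollary \ref{corgen}.
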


\begin{proof}
By Theorem \ref{propgen2}, we know that $I_{t_i}(X_{[a_i,b_i]})$ and $I_{t_i}(X^{[a_i,b_i]})$ are Knutson ideals. From property $(iii)$ of Knutson ideals, we get the thesis.
\end{proof}

 \end{document}